\documentclass[12pt]{amsart}

\setlength{\textheight}{23cm}
\setlength{\textwidth}{16cm}
\setlength{\topmargin}{-0.8cm}
\setlength{\parskip}{0.3\baselineskip}
\hoffset=-1.4cm

\newtheorem{theorem}{Theorem}[section]

\newtheorem{proposition}[theorem]{Proposition}
\newtheorem{remark}[theorem]{Remark}

\newtheorem{question}[theorem]{Question}

\newcommand{\ncom}{\newcommand}
\ncom{\ep}{\epsilon}
\ncom{\rar}{\rightarrow}
\ncom{\thrar}{\twoheadrightarrow}
\ncom{\lrar}{\longrightarrow}
\ncom{\ov}{\overline}
\ncom{\what}{\widehat}

\newcommand{\ignore}[1]{}
\ncom{\m}{\mbox}
\ncom{\sta}{\stackrel}
\ncom{\C}{{\mathbb C}}
\ncom{\A}{{\mathbb A}}
\ncom{\Z}{{\mathbb Z}}
\ncom{\Q}{{\mathbb Q}}
\ncom{\R}{{\mathbb R}}
\ncom{\G}{{\mathbb G}}
\ncom{\HH}{{\mathbb H}}
\ncom{\al}{\alpha}
\ncom{\p}{{\mathbb P}}
\ncom{\N}{{\mathbb N}}
\ncom{\K}{{\mathbb K}}
\ncom{\X}{{\mathbb X}}
\ncom{\f}{\frac}
\ncom{\cA}{{\mathcal A}}
\ncom{\cB}{{\mathcal B}}
\ncom{\cD}{{\mathcal D}}
\ncom{\cDB}{{\mathcal D \mathcal B}}
\ncom{\cX}{{\mathcal X}}
\ncom{\cO}{{\mathcal O}}
\ncom{\cW}{{\mathcal W}}
\ncom{\cL}{{\mathcal L}}
\ncom{\cP}{{\mathcal P}}
\ncom{\cH}{{\mathcal H}}
\ncom{\cS}{{\mathcal S}}
\ncom{\cM}{{\mathcal M}}
\ncom{\cC}{{\mathcal C}}
\ncom{\cK}{{\mathcal K}}
\ncom{\cT}{{\mathcal T}}
\ncom{\cF}{{\mathcal F}}
\ncom{\cN}{{\mathcal N}}
\ncom{\cJ}{{\mathcal J}}
\ncom{\cV}{{\mathcal V}}
\ncom{\cZ}{{\mathcal Z}}
\ncom{\cU}{{\mathcal U}}
\ncom{\cSU}{{\mathcal S \mathcal U}}
\ncom{\cG}{{\mathcal G}}
\ncom{\cQ}{{\mathcal Q}}
\ncom{\cR}{{\mathcal R}}
\ncom{\cY}{{\mathcal Y}}
\ncom{\cE}{{\mathcal E}}
\ncom{\cI}{{\mathcal I}}
\ncom{\mylabel}[1]{{\rm (#1)}\label{#1}}
\ncom{\Hom}{{\textit{Hom}}}
\ncom{\eop}{{\hfill $\Box$}}
\begin{document}
\baselineskip=16pt

\title[Cohomological invariants of flat connections]{Cohomological invariants of a variation of flat connections}
\author{Jaya NN Iyer}

\address{The Institute of Mathematical Sciences, CIT
Campus, Taramani, Chennai 600113, India}
\email{jniyer@imsc.res.in}

\footnotetext{Mathematics Classification Number: 53C55, 53C07, 53C29, 53.50. }
\footnotetext{Keywords: variation of flat connections, Differential cohomology, canonical invariants.}

\begin{abstract} In this paper, we apply the theory of Chern-Cheeger-Simons to construct canonical invariants associated to an $r$-simplex whose points parametrize flat connections on a smooth manifold $X$. These invariants lie in degrees $(2p-r-1)$-cohomology with $\C/\Z$-coefficients, for $p> r\geq 1$. This corresponds to a homomorphism on the higher homology groups of the moduli space of flat connections, and taking values in $\C/\Z$-cohomology of the underlying smooth manifold $X$. 
\end{abstract}

\maketitle


\section{Introduction}

Suppose $X$ is a smooth manifold and $E$ is a smooth complex vector bundle on $X$ of rank $n$. 
Consider a smooth connection $\nabla$ on $E$. The Chern-Weil theory defines the Chern forms $P_p(\nabla^2,...,\nabla^2)$, of degree $2p$ and for $p\geq 1$. Here $P_p$ denotes a ${GL}_n$-invariant polynomial 
of degree $p$ such that $P_p(\nabla^2,...,\nabla^2)=tr(\nabla^2)$ and $\nabla^2$ denotes the curvature form 
of the connection form $\nabla$ (see \S \ref{convention}). The Chern forms are closed and correspond to the de Rham Chern classes $c_p(E)\in H^{2p}_{dR}(X,\C)$. These are the primary invariants of $E$, and are independent
of the connection. In particular when $\nabla$ is flat, i.e., when 
$\nabla^2=0$, the Chern-Cheeger-Simons 
theory \cite{Chn-Sm}, \cite{Ch-Sm} defines cohomology classes 
$$
\hat{c_p}(E,\nabla) \in H^{2p-1}(X,\C/\Z(p)),
$$
for $p\geq 1$.
These are the secondary invariants of $(E,\nabla)$. These invariants are known to be rigid in a family of flat connections, when $p\geq 2$ \cite[Proposition 2.9, p.61]{Ch-Sm}. In other words, the cohomology 
class remains the same in a variation of flat connections.

The proof of the rigidity is obtained by looking at the \text{eta}-differential 
form :
\begin{equation}\label{etainv}
\eta_p:= p.\int_I P_p(\f{d}{dt}\nabla_t, \nabla_t^2,...,\nabla_t^2). dt.
\end{equation}

 The difference of 
the Chern-Simons differential character $\hat{c_p}(E,\nabla_1)-\hat{c_p}(E,\nabla_0)$ is just the linear functional defined by the degree $(2p-1)$-form $\eta_p$ (see \cite[Proposition 2.9, p.61]{Ch-Sm}). This form also gives the rigidity of the Chern-Simons classes for flat connections, in degrees at least two.

M. Kontsevich  asked whether the  eta-form is related to certain liftings in Chow theory. 
Later, C. Simpson remarked that, these should provide maps  on the higher homotopy groups of the moduli space $\cM_X(n)$ of flat connections.

We investigate these remarks and thoughts in this paper.
We are motivated by Karoubi's simplicial invariants in \cite[p.68]{Karoubi}, where he considers sequences of smooth connections on a smooth vector bundle. His construction provides maps on the homology groups of moduli space of connections and takes values in the cohomology of the underlying manifold. The maps are provided by transgression of Chern-Weil forms and using integration along fibres of differential forms.

We would like to extend his methods, to obtain suitable maps on homology of moduli space of flat connections, by applying the Chern-Simons theory and integration along fibres.

Integration along fibres on differential cohomology have been defined by several authors, see \cite{Freed}, \cite{Singer}. In our situation, we need to apply for the projection $X\times \Delta^r\rar X$, $r\geq 1$. However $X\times \Delta^r$ is a manifold with boundary and corners. C. Baer and C. Becker (see \cite{BB14}) have defined 'integration along fibres' on differential cohomology for manifolds with boundary and C. Becker has adapted their proof to the projection
$X\times \Delta^r \rar X$. 
These constructions however seem inadequate, for our purpose. Hence we restrict the range on degrees where the pushforward on odd-degree $\C/\Z$-cohomology could be applied. We make this precise as follows.

In \S \ref{simplicialDE}, we first consider the simplicial set $\cD(E)$ associated to a smooth vector bundle $E$ on $X$. Denote $q:\Delta^r \times X\rar X$ the projection onto the second factor $X$, where $\Delta^r$ is the standard $r$-real simplex.
Consider a general connection $\tilde{D}$ on $q^*E$ over $\Delta^r\times X$.  These may be organized into a simplicial set $\cD(E)$. Namely, the $r$-simplices of this simplicial set are connections over $\Delta^r\times X$, and the face maps are given by restriction (degeneracies given by pullback).

Denote $\cZ_r(\cD(E))$ the cycles in this simplicial set, i.e. formal sums of simplices whose boundary is zero.

Using methods similar to \cite[\S 3]{Karoubi}, we deduce the following.

\begin{theorem} There is a non-zero homomorphism on the group of cycles $\cZ_r(\cD(E))$ of $\cD(E)$, for $r\geq 1$:
\begin{equation}\label{hommap0}
\rho_r:\mathcal{Z}_r(\cD(E)) \rar \bigoplus_{p>r} \widehat{H^{2p-r}}(X).
\end{equation}
\end{theorem}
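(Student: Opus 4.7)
The plan is to apply the Cheeger-Simons differential character construction to each simplex of $\cD(E)$ and then push the resulting class forward along the projection $q : \Delta^r \times X \to X$ in differential cohomology. More precisely, to an $r$-simplex, i.e.\ a connection $\tilde{D}$ on $q^*E$ over $\Delta^r \times X$, one attaches the Cheeger-Simons characters $\hat{c}_p(q^*E, \tilde{D}) \in \widehat{H^{2p}}(\Delta^r \times X)$ whose curvature is the Chern-Weil form $P_p(\tilde{D}^2,\ldots,\tilde{D}^2)$. Applying the Baer-Becker fiber integration $q_*$, in the form adapted by Becker to the projection from the manifold-with-corners $\Delta^r \times X$, yields a class $q_*\hat{c}_p(q^*E,\tilde{D}) \in \widehat{H^{2p-r}}(X)$. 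Setting $\rho_r^p(\tilde{D}) := q_*\hat{c}_p(q^*E,\tilde{D})$, summing over $p > r$, and extending $\Z$-linearly to the chain group $C_r(\cD(E))$ defines $\rho_r$ at the cochain level.

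The second step is to show that $\rho_r$ descends to cycles $\cZ_r(\cD(E))$. The key ingredient is a Stokes/fiber-integration identity in differential cohomology: for $\hat\alpha \in \widehat{H^{2p}}(\Delta^r \times X)$,
$$q_*\hat\alpha \ =\ \text{(bulk contribution)}\ +\ \sum_{i=0}^{r}(-1)^i\,(q^{(i)})_*\bigl(\iota_i^*\hat\alpha\bigr),$$
where $\iota_i:\partial_i\Delta^r\times X\hookrightarrow\Delta^r\times X$ is the $i$-th face inclusion and $q^{(i)}$ the induced $(r-1)$-simplex projection. Naturality of $\hat{c}_p$ under pullback identifies $\iota_i^*\hat{c}_p(q^*E,\tilde{D})$ with $\hat{c}_p(q^*E, d_i\tilde{D})$, where $d_i\tilde{D}$ is the $i$-th simplicial face of $\tilde{D}$. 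For a cycle $c = \sum n_\sigma \sigma$ one has $\sum_i (-1)^i d_i c = 0$, so the corner contributions telescope to zero and $\rho_r(c)$ is a well-defined element of $\bigoplus_{p>r} \widehat{H^{2p-r}}(X)$; $\Z$-linearity is automatic.

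Non-vanishing is established by exhibiting an explicit cycle on which $\rho_r$ is nontrivial. A convenient source, following Karoubi's approach, is to construct an $r$-simplex of connections on a trivial bundle whose Chern-Simons transgression form $\eta_p$ (compare (\ref{etainv})) integrated along $\Delta^r$ is a nonzero closed $(2p-r-1)$-form on $X$, and whose alternating face-restrictions form a cycle; universal examples arise from compact Lie groups with their Maurer-Cartan data or from pullbacks of known families of flat connections in representation varieties.

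The main obstacle is the technical content of the pushforward step: fiber integration in differential cohomology is delicate when the fibers are manifolds with corners, and the available Baer-Becker construction applies only in a restricted range of degrees. The condition $p > r$ is imposed precisely to guarantee that the boundary-and-corner contributions in the Stokes identity above can be tracked cleanly and absorbed into the simplicial face operators, so that the proposed $\rho_r$ is both well-defined on cycles and lands in the stated differential cohomology group. Making this transparent — with the correct signs, consistent exact-form ambiguities, and compatibility with the simplicial face maps of $\cD(E)$ — is the heart of the argument.
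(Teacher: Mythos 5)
Your overall strategy --- attach a Cheeger--Simons character to each simplex, push it forward along $q:\Delta^r\times X\rar X$, and let the face contributions telescope away on cycles --- has the right shape, but its first step contains a genuine gap. You define $\rho_r^p(\tilde D):=q_*\hat c_p(q^*E,\tilde D)$ as an element of $\widehat{H^{2p-r}}(X)$ for a \emph{single} $r$-simplex. No such element is available: because $\Delta^r\times X$ has boundary and corners, fiber integration of a differential character does not return a differential character --- the would-be pair $(\mathrm{cochain},\mathrm{form})$ fails the defining condition $\partial(f)=\alpha$ exactly by the face terms you later want to cancel. The paper says explicitly that the B\"ar--Becker fiber integration is ``inadequate'' in this setting, and its construction is designed precisely to avoid invoking it. You acknowledge the obstacle in your final paragraph but then defer it (``making this transparent \dots is the heart of the argument''), so the heart of the argument is in fact absent from the proposal; likewise your ``Stokes/fiber-integration identity'' is stated with an undefined ``bulk contribution'' rather than proved.

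The paper instead works entirely with forms and singular cochains: it sets $TP(\tilde D)_{p,r}=\int_{\Delta^r}P_p(\tilde D^2,\dots,\tilde D^2)$, chooses a path $\{\tilde D^s\}$ of $r$-simplices from the trivial simplex to $\tilde D$, defines the transgression form $\omega(\tilde D)_{p,r}=\int_I TP(\tilde D^s)_{p,r}$, and derives from Stokes' theorem and Karoubi's identity \eqref{Kform} that
$$
d\omega(\tilde D)_{p,r}=-\sum_{i=0}^r(-1)^i\omega(\tilde D_i)_{p,r-1}-(-1)^{2p-r-1}TP(\tilde D)_{p,r}.
$$
Only after summing over a cycle $\sigma$ do the face terms cancel, and only then does the cochain $c\mapsto -\int_c\omega(\sigma)_{p,r}$ become an honest differential character lifting $(-1)^{2p-r-1}TP(\sigma)_{p,r}$. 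To salvage your version you would have to do the same thing: refrain from claiming a character for an individual simplex, carry the pair (cochain, form) together with its boundary defect through the computation, and write out the corner terms explicitly --- which essentially reproduces the paper's argument. Two smaller remarks: if your canonical pushforward did exist it would remove the paper's dependence on the choice of path $\{\tilde D^s\}$, which would be an improvement; and the non-vanishing assertion is not actually proved in the paper's own argument either, so your sketch of exhibiting an explicit cycle is a reasonable, though equally unexecuted, supplement.
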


When the points of an $r$-simplex parametrize flat connections, then the above construction yields a well-defined Chern-Simons class, see Proposition \ref{flatpropn}. 
We apply the pushforward map  on the $\C/\Z$-cohomology of $X\times \Delta^r$, to obtain well-defined classes, 
$$
CS(\tilde{D})_{p,r}\,\in\, H^{2p-r-1}(X,\C/\Z(p))
$$
when $r< p$.

In turn, the homomorphism in \eqref{hommap0} restricts and descends on the homology groups $\HH_r(\cM_X(n))$ of moduli space $\cM_X(n)$ of rank $n$ flat connections, and takes value in  $\C/\Z$-cohomology. More precisely, we have:
 
\begin{theorem}
There is a non-zero homomorphism, for $r\geq 1$:
$$
\rho'_r:\,\HH_r(\cM_X(n)) \rar \bigoplus_{p\,>\, r}H^{2p-r-1}(X, \C/\Z(p)).
$$
\end{theorem}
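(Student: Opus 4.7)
The plan is to descend the cycle-level homomorphism $\rho_r$ of \eqref{hommap0} to singular homology of $\cM_X(n)$, while simultaneously refining its target from differential cohomology to ordinary $\C/\Z$-cohomology. Let $\cD^{\flat}(E)\subset\cD(E)$ denote the simplicial subset whose $r$-simplices are connections $\tilde D$ on $q^\ast E$ such that $\tilde D|_{\{t\}\times X}$ is flat for every $t\in\Delta^r$. By Proposition \ref{flatpropn}, for such $\tilde D$ and for $r<p$, the $p$-component of $\rho_r(\tilde D)$ is represented by a differential character on $\Delta^r\times X$ whose curvature vanishes when restricted to each slice. Applying the integration-along-fibres of \cite{BB14} in its Becker adaptation to $q:\Delta^r\times X\to X$, which is legitimate in the range $p>r$ precisely because of the slicewise flatness, produces the classes $CS(\tilde D)_{p,r}\in H^{2p-r-1}(X,\C/\Z(p))$.

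Next I would check that the assignment $\tilde D\mapsto\bigoplus_{p>r} CS(\tilde D)_{p,r}$ factors through $\HH_r(\cM_X(n))$. First, a continuous singular simplex $\sigma:\Delta^r\to\cM_X(n)$ is, after choosing a smooth representative and a local trivialization of the universal bundle, an element of $\cD^{\flat}(E)$; two such lifts differ by a gauge transformation on $\Delta^r\times X$, and gauge invariance of the Chern-Simons character makes the assignment independent of these choices. Second, and more importantly, the map must vanish on boundaries: for an $(r+1)$-simplex $\tilde D$ of flat connections I need $\sum_i(-1)^i CS(d_i\tilde D)_{p,r}=0$. This follows from Stokes' theorem for differential characters on the manifold with corners $\Delta^{r+1}\times X$: the curvature of $\hat c_p(q^\ast E,\tilde D)$ vanishes (slicewise flatness), so its fibre integral over $\partial(\Delta^{r+1}\times X)$ along the simplex factor equals zero, and this integral unfolds as the signed sum of the face contributions $CS(d_i\tilde D)_{p,r}$. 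This is exactly the higher-dimensional analogue of the rigidity statement \cite[Proposition 2.9]{Ch-Sm}, in which the eta-form \eqref{etainv} plays the role of the boundary correction.

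Non-triviality is established by exhibiting a single explicit class that is detected by $\rho'_r$. For $r=1$ one can take $X$ to be a closed oriented $(2p-2)$-manifold admitting a continuous family of flat connections $\{\nabla_t\}_{t\in S^1}$ (for example, deforming within a representation variety of $\pi_1(X)$) for which the integrated form $\eta_p$ represents a non-zero class; by construction $\rho'_1$ sends the fundamental class of this loop to $[\eta_p]\in H^{2p-2}(X,\C/\Z(p))$, reproducing Karoubi's non-vanishing computation in \cite[p.68]{Karoubi}. Higher $r$ cases follow by iterating this example on products.

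The main obstacle is the Stokes argument underlying the descent to homology: one has to handle the pushforward of differential characters across the boundary and corners of $\Delta^{r+1}\times X$, and to verify that the vanishing of the slicewise curvature is enough to kill the correction term coming from the curvature of $\tilde D$ in the simplex directions. This is precisely why the paper restricts to the regime $p>r$, where the Baer–Becker integration along fibres is available and the relevant eta-type form sits in the correct degree; outside this range the target would no longer be ordinary $\C/\Z$-cohomology and the boundary cancellation would fail.
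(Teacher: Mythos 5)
Your construction and descent follow essentially the same route as the paper: restrict to the sub-simplicial set of relatively flat connections, observe that in the range $p>r$ the Chern form $P_p(\tilde D^2,\ldots,\tilde D^2)$ vanishes identically on $\Delta^r\times X$ (not merely slicewise), push the resulting flat Chern--Simons class forward to $H^{2p-r-1}(X,\C/\Z(p))$, and kill boundaries because the correction term is controlled by the vanishing curvature form. One divergence: you invoke the B\"ar--Becker fibre integration of differential characters on the manifold with corners, whereas the paper explicitly declares that machinery inadequate here and deliberately arranges to need only the ordinary proper pushforward \eqref{homcoh} on $\C/\Z$-cohomology, together with the explicit transgression forms $\omega(\tilde D)_{p,r}$ and the identity \eqref{Oform} from Theorem \ref{higherhomotopy}. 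Your Stokes-for-characters argument is morally the same computation, but if you rely on it you owe a precise statement of fibre integration over a simplex with corners; the paper's form-level argument avoids that debt. (Both arguments share the subtlety that the boundary correction on an $(r+1)$-simplex involves the Chern form on $\Delta^{r+1}\times X$, which vanishes by the local argument only for $p>r+1$; the borderline case $p=r+1$ is glossed over in both.)

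The genuine error is in your non-triviality step. For a loop $\{\nabla_t\}$ of \emph{flat} connections and $p\geq 2$, the form $\eta_p=p\int P_p(\dot\nabla_t,\nabla_t^2,\ldots,\nabla_t^2)\,dt$ is identically zero because $\nabla_t^2=0$ --- this is precisely the rigidity statement recorded after \eqref{etapform}. So $\eta_p$ cannot ``represent a non-zero class,'' and $\rho'_1$ of such a loop is not $[\eta_p]$: it is the fibre-integrated Chern--Simons class in $H^{2p-2}(X,\C/\Z(p))$, a genuinely $\C/\Z$-valued invariant that is not computed by any globally defined closed complex-valued form. Karoubi's non-vanishing computations concern arbitrary (non-flat) connections and do not transfer to the flat setting. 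The paper itself offers no non-vanishing witness inside the proof of this theorem, but your proposed witness is actively false rather than merely missing, so the ``non-zero'' part of the statement remains unproved in your write-up.
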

We briefly explain the construction of the maps $\rho_r $ and $\rho_r'$.

We consider a smooth connection $\tilde{D}:= t_0D_0+...+ t_rD_r$, with $\sum t_i=1$ and $t_i\geq 0$, on $q^*E$. Here $q:X\times \Delta^r \rar X$ is the first projection and $\Delta^r$ is an $r$-simplex with vertices $D_0,...,D_r$.

To define the map $\rho_r$, we consider the integration of Chern forms, denoted by 
$$
TP(\tilde{D})_{p,r}:= \int_{\Delta^r} P_p(\tilde{D}^2,...,\tilde{D}^2).
$$
This is a $(2p-r)$-differential form on $X$.
We now want to obtain a well-defined canonical lift of this form, as a differential character on $X$.

 We start by defining a
$(2p-r-1)$-form $\omega(\tilde{D})_{p,r}$ on $X$, and which satisfies the identity:
\begin{equation*}
d\omega(\tilde{D})_{p,r}= -\sum_{i=0}^r (-1)^r\omega(\tilde{D}_i)_{p,r-1}  -(-1)^{2p-r-1} TP(\tilde{D})_{p,r}.
\end{equation*}
See \eqref{Oform}.

This identity helps us to obtain a canonical lift of $TP(\tilde{D})_{p,r}$  as a well-defined differential character on $X$, if we replace $\tilde{D}$ by a cycle in $\cD(E)$. 
However, to obtain a map on homology of moduli space of flat connections, we note that the simplicial set $\cD(E)$  approximates the moduli space of smooth connections, and the above computations are carried over a sub-simplicial set associated to relatively flat connections.

The details are provided in \S \ref{mapHH}. 

The above constructions could also be carried out replacing the Chern forms by $GL_n$-invariant polynomial forms.

In the final section \S \ref{Rquestion}, we extend the Cheeger-Simons question on rationality of above maps.

{\Small Acknowledgement: This work is motivated by a question of M. Kontsevich on the rigidity property of the original Chern-Simons classes, arising from the above form $\eta_p$. We thank him for this question. Thanks also to C. Simpson for sharing his insightful and inspiring remarks, and motivating us in this direction.
The referee made helpful suggestions and important remarks (see Remark \ref{referee}), and we are grateful to him/her.

 We also thank M. Karoubi for pointing out to his works relevant to our constructions and to P. Deligne for his helpful remarks (especially on Prop.\ref{flatpropn}). Finally, we thank Scott Wilson, C. Baer and C. Becker for sharing their thoughts/works on fiber integration and comments. This work was initiated at IHES, Paris, during our stay in 2013 and we are grateful to the institute for their hospitality and support.}

\section{Preliminaries: original Chern-Cheeger-Simons theory}

Suppose $X$ is a smooth (finite dimensional) manifold, and $E$ is a smooth complex vector bundle on $X$. Our aim is to construct a canonical lifting of the differential form \eqref{etainv}, 
in the differential 
cohomology, defined by Cheeger and Simons in \cite{Ch-Sm}. We start by recalling the original constructions.

\subsection{Conventions}\label{convention}

 Denote by $\Z(p)$ the subgroup of $\C$ generated by $(2\pi i)^p$.
For each subgroup $L$ of $\C$, set
$$
L(p)=L\otimes_\Z \Z(p).
$$
The isomorphism $\Z\rar \Z(p)$ that takes $1$ to $(2\pi i)^p$ induces a canonical isomorphism
\begin{equation}\label{isomorphism}
H^\bullet(X,\Z)\rar H^\bullet(X,\Z(p)).
\end{equation}
The $p$-th Chern class of a complex vector bundle lies in $H^{2p}(X,\Z(p))$ which is the image of the usual topological Chern classes, under the isomorphism given by \eqref{isomorphism}.

Suppose $(E,\nabla)$ is a complex vector bundle of rank $n$ with a connection on $X$.
 Then the Chern forms
$$
c_k(E,\nabla) \in A^{2k}_{cl}(X,\Z)
$$
for $0\leq k\leq \m{rank }(E)$, are defined using the universal Weil homomorphism \cite[p.50]{Chn-Sm}.
There is a $GL_n$-invariant, symmetric, homogeneous and multilinear 
polynomial $P_k$ of degree $k$ in $k$ variables on the Lie algebra ${\textbf gl}_n$ such that if
$\Omega$ is the curvature of $\nabla$ then $c_k(E,\nabla ) = \cP_k (\f{-1}{2\pi i}\Omega , \ldots , \f{-1}{2\pi i}\Omega )$.
Here $P_k$ are defined as follows;
$$
\m{det}(I_r+x)=1+P_1(x)+P_2(x)+...+P_r(x),\,\,x\in \textbf{gl}_n(\C).
$$
When $x_i=x$ for each $i$, then $P_k(x,...,x)= \m{trace}(\wedge^k x)$ (see \cite[p.403]{GriffithsHarris}),
however the wedge product here is taken in the variable $\C ^n$, not the wedge of forms on the base. 
If $x$ is a diagonal matrix with eigenvalues $\lambda _1,\ldots , \lambda _r$ then
 $P_k(x,...,x) = \sum _I \lambda _{i_1}\cdots \lambda _{i_k}$. We can also express $P_k$ in terms
of the traces of products of matrices. In this expression,
the highest order term of $P_k$ is the symmetrization of $Tr (x_1\cdots x_k)$ multiplied by a constant, the
lower order terms are symmetrization's of $Tr(x_1\cdots x_{i_1})Tr(\cdots ) \cdots Tr(x_{i_a+1}\cdots x_k)$, 
with suitable constant coefficients. 

\begin{remark}
The following constructions and proofs also hold if we replace the polynomial $P_p$ by any $GL_n$-invariant polynomial of degree $p$, in particular we could also take the polynomial which gives the $p$-th Chern character term.
\end{remark}

We recall the convention on forms, from \cite[p.50]{Chn-Sm}:
denote $\bigwedge^{k,l}(E)$, $k$-forms on $E$ taking values in $\textbf{gl}_n^{\otimes l}$. We have the usual exterior
differential 
$$
d: \bigwedge^{k,l}(E) \rar \bigwedge^{k+1,l}(E).
$$
If $\phi\in \bigwedge^{k,l}(E),\,\psi\in \bigwedge^{k',l'}(E)$, then define
$$
\phi\wedge \psi \in \bigwedge^{k+k',l+l'}(E),
$$
\begin{equation*}
\phi\wedge \psi(x_1,...,x_{k+k'})= \sum_{\pi \m{ shuffle }} \sigma(\pi)\phi(x_{\pi_1},...,x_{\pi_k}) \otimes \psi(x_{\pi_{k+1}},...,x_{\pi_{k+1}},...,x_{\pi_{k+k'}}).
\end{equation*}
If $P$ is a $GL_n$-invariant polynomial of degree $l$ and $\phi \in \bigwedge^{k,l}(E)$. Then $P(\phi)=P\circ \phi$ is a real 
valued $k$-form on $E$. The following identities hold:
\begin{eqnarray*}
P(\phi\wedge \psi\wedge \rho) & = & (-1)^{kk'}P(\psi\wedge\phi\wedge\rho), \\
d(P(\phi)) & = & P(d\phi), \\
P(\phi\wedge \psi \wedge \rho) &= & (-1)^{kk'}P(\psi\wedge \phi \wedge \rho).
\end{eqnarray*}
Here $\phi \in \bigwedge^{k,l}, \psi \in \bigwedge^{k',l'}, \rho\in \bigwedge^{k",l"}$.

\subsection{Preliminaries}\cite{Ch-Sm}:

Suppose $X$ is a smooth manifold and $(E,\nabla)$ is a flat connection of rank $n$ on $X$. Recall the differential cohomology:
$$
\widehat{H^{p}}(X):= \{ (f,\alpha): f: Z_{p-1}(X)\rar \C/\Z,\, \partial(f)= \alpha, \alpha \m{ is a closed form and integral valued} \}.
$$
Here $Z_{p-1}(X)$ is the group of $(p-1)$-dimensional cycles, and $\partial$ is the coboundary map on cochains. The linear functional $\partial(f)$ is defined by integrating the form 
$\alpha$ against $p$-cycles, and it takes integral values on integral cycles.

In \cite{Ch-Sm}, the functional $f$ is $\R/\Z$-valued. We could also take $\C/\Z$-valued functionals, since we hope that our applications will utilise $\C/\Z$-characters.

This cohomology fits in an exact sequence:
\begin{equation}\label{diffcoh}
0\rar H^{p-1}(X,\C/\Z) \rar \widehat{H^{p}}(X) \rar A^{p}_\Z(X)_{cl} \rar 0.
\end{equation}

Here $A^{p}_\Z(X)_{cl}$ denotes the group of closed complex valued $p$-forms with integral periods.

Given a smooth connection $(E,\nabla)$ on $X$. The Chern forms $c_p(E,\nabla)$ are degree $2p$-differential forms on $X$.
Let $P_p$ denote the degree $p$ polynomial defining the $p$-Chern form, i.e., $c_p(E,\nabla):=P_p(\nabla^2,...,\nabla^2)$ (see \S \ref{convention}). Here $\nabla^2$ denotes the curvature form, and we say that $\nabla$ is \textit{flat} if $\nabla^2$ is identically zero.
 
We recall the following construction, to motivate the construction of tertiary classes in the next section.
\begin{theorem}\cite{Ch-Sm}\label{canDC}
The Cheeger-Simons construction defines a differential character, for any smooth connection $(E,\nabla)$:
$$
\hat{c_p}(E,\nabla)\, \in\, \widehat{H^{2p}}(X), \,\,\,p\geq 0.
$$
Moreover, if $\nabla$ is flat then 
$$
\hat{c_p}(E,\nabla)\, \in\, H^{2p-1}(X,\C/\Z(p)).
$$
This is the same as the Chern-Simons class $CS_{p}(E,\nabla)$.
\end{theorem}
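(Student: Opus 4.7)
The plan is to follow the Cheeger--Simons construction via a universal connection. First I would invoke the Narasimhan--Ramanan theorem to obtain, for $N$ sufficiently large relative to $\dim X$, a universal connection $(E_N, \nabla_N)$ on a finite-dimensional model $BU(n)_N$ of the classifying space, together with a smooth classifying map $f : X \rar BU(n)_N$ realising $(E, \nabla) = f^*(E_N, \nabla_N)$. The character $\hat{c_p}(E_N, \nabla_N)$ is constructed at the universal level, and $\hat{c_p}(E, \nabla)$ is then defined by pullback.

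To construct the universal character, fix an integral singular cocycle $u_N$ representing $c_p(E_N) \in H^{2p}(BU(n)_N, \Z(p))$. The Chern form $P_p(\Omega_N, \ldots, \Omega_N)$ is closed and represents the same class in complex cohomology under the isomorphism \eqref{isomorphism}, so $P_p(\Omega_N, \ldots, \Omega_N) - u_N$ bounds at the cochain level. For each $(2p-1)$-cycle $z$ in $BU(n)_N$, pick a $2p$-chain $c$ with $\partial c = z$ and set
\[
\hat{c_p}(E_N, \nabla_N)(z) \;:=\; \int_c P_p(\Omega_N, \ldots, \Omega_N) \;-\; \langle u_N, c\rangle \pmod{\Z(p)}.
\]
Independence of the choice of $c$ follows from the integrality of Chern classes: for any closed $2p$-chain $\gamma$, the quantity $\int_\gamma P_p(\Omega_N, \ldots, \Omega_N) - \langle u_N, \gamma\rangle$ lies in $\Z(p)$ since both terms compute the pairing with the same integral class. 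The associated closed integral form is exactly $P_p(\Omega_N, \ldots, \Omega_N)$, so the pair defines an element of $\widehat{H^{2p}}(BU(n)_N)$.

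The main obstacle is to verify that the pullback $\hat{c_p}(E, \nabla) := f^*\hat{c_p}(E_N, \nabla_N)$ is independent of the classifying map. Two such maps $f_0, f_1$ are smoothly homotopic via some $F : X \times [0,1] \rar BU(n)_N$, and pulling back $(E_N, \nabla_N)$ along $F$ produces a connection $\tilde{\nabla}$ on $X \times [0,1]$ interpolating between the two pullbacks. Fibre integration of $P_p(\tilde{\Omega}, \ldots, \tilde{\Omega})$ along the interval yields a transgression form $TP_p$ with $dTP_p = P_p(\Omega_1, \ldots, \Omega_1) - P_p(\Omega_0, \ldots, \Omega_0)$, which reduced modulo $\Z(p)$ shows the two pullback characters agree. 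Finally, when $\nabla$ is flat we have $\Omega \equiv 0$, so $P_p(\Omega, \ldots, \Omega) = 0$; the image of $\hat{c_p}(E, \nabla)$ under the surjection $\widehat{H^{2p}}(X) \thrar A^{2p}_\Z(X)_{cl}$ of \eqref{diffcoh} vanishes, and the character descends to the subgroup $H^{2p-1}(X, \C/\Z(p))$. Identifying this lift with $CS_p(E, \nabla)$ amounts to pulling the construction back to the frame bundle, where flatness of $\nabla$ makes the Chern--Simons transgression form a global primitive whose reduction modulo $\Z(p)$ is precisely the differential character.
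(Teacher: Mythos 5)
Your overall strategy coincides with the paper's: both pass to the Narasimhan--Ramanan universal connection on a finite Grassmannian, define the character there, pull back along a classifying map, and observe that flatness kills the curvature form so that the character falls into $H^{2p-1}(X,\C/\Z(p))$ by the exact sequence \eqref{diffcoh}. The one place where your argument has a genuine gap is the independence-of-classifying-map step. You take an arbitrary smooth homotopy $F$ between $f_0$ and $f_1$ and conclude from $dTP_p = P_p(\Omega_1,\ldots)-P_p(\Omega_0,\ldots)$ ($=0$ here, since both maps pull the universal connection back to the same $\nabla$) that the two pullback characters agree ``reduced modulo $\Z(p)$.'' That does not follow: the homotopy formula says the difference of the two characters is the character $z\mapsto \int_z TP_p \bmod \Z(p)$, and a closed form $TP_p$ produced by an arbitrary homotopy has no reason to have periods in $\Z(p)$. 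To close the gap you must either (a) choose the homotopy $F$ (after stabilizing $N$) to classify the product connection $pr_X^*\nabla$ on $X\times[0,1]$, so that $F^*\Omega_N$ has no $dt$-component and $TP_p$ vanishes identically, or (b) use the shortcut the paper uses: since $G(r,N)$ has no odd-degree cohomology, the sequence \eqref{diffcoh} gives $\widehat{H^{2p}}(G(r,N))\cong A^{2p}_\Z(G(r,N))_{cl}$, so the universal character is \emph{uniquely} determined by its curvature form; this makes your explicit cocycle-and-chain construction, and much of the well-definedness discussion, unnecessary at the universal level.

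Apart from that, your explicit chain-level definition of the universal character and the integrality argument for independence of the bounding chain are correct, and your treatment of the flat case and of the identification with $CS_p(E,\nabla)$ matches the paper's (which is equally brief on the last point, deferring to \cite{Ch-Sm}).
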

\begin{proof}
Let $P_p$ denote the degree $p$, $GL_n$-invariant polynomial, defined in \S \ref{convention}. 
The curvature form of $\nabla$ is $\Theta:=\nabla^2$. The Chern form in degree $2p$ is given as $P_p(\Theta,...,\Theta)$. The Chern-Weil theory says that the Chern form is closed and that it has integral periods. Hence it gives an element in the group $A^{2p}_\Z(X)_{cl}$.
Now use the universal smooth connection \cite{Narasimhan}, which lies on a Grassmannian $G(r,N)$, for large $N$. The differential cohomology of $G(r,N)$ is just the group $A^{2p}_\Z(X)_{cl}$, since the 
odd degree cohomologies of $G(r,N)$ are zero.  Hence the differential character $\hat{c_p}$ is the Chern form of the universal connection. The pullback of this element via a classifying map of $(E,\nabla)$ defines the differential character
$\hat{c_p}(E,\nabla)\in \widehat{H^{2p}}(X)$. 

Suppose $\nabla$ is flat, i.e., $\Theta=0$. Then the Chern form $P_p(\Theta,...,\Theta)=0$. Hence the differential character lies in the group
$H^{2p-1}(X,\C/\Z(p))$.

\end{proof}

\section{Pushforward of Chern-Simons forms}\label{analytichigherCS}


\subsection{Rigidity of Chern-Cheeger-Simons classes and the eta-form}

As in the previous section, suppose $(E,\nabla)$ is a smooth connection on a smooth manifold. Suppose
$\gamma:=\{\nabla_t\}_{t\in I}$ is a one parameter family of smooth connections on $E$ and $\nabla=\nabla_0$. Here $I:=[0,1]$ is the closed unit interval.

The family $\{\nabla_t\}_{t\in I}$ gives us a family of differential characters:
$$
\hat{c_p}(E,\nabla_t) \,\in\, \widehat{H^{2p}}(X)
$$
for $p\geq 1$ and $t\in [0,1]$.

The difference of the differential characters when $t=0$ and when $t=1$, is given by the following variational formula.

\begin{proposition}
With notations as above, we have the following equality:
$$
\hat{c_p}(E,\nabla_1)-\hat{c_p}(E,\nabla_0) \,=\, p. \int_{0}^1 P_p(\f{d}{dt}\nabla_t,\nabla_t^2,...,\nabla_t^{2}) dt
$$
for $p\geq 1$.
\end{proposition}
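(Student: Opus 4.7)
The plan is to encode the family $\{\nabla_t\}_{t\in I}$ as a single connection on a bundle over $X \times I$, then use the defining property of the Cheeger-Simons differential character under the chain $c \times I$. Concretely, let $q:X\times I\rar X$ be the projection, set $\tilde{E}:=q^*E$, and let $\tilde{\nabla}:=d_t+\nabla_t$ be the connection on $\tilde{E}$ whose restriction to $X\times\{t\}$ is $\nabla_t$. Theorem \ref{canDC} produces a differential character $\hat{c_p}(\tilde{E},\tilde{\nabla})\in \widehat{H^{2p}}(X\times I)$, which restricts under $i_t:X\hookrightarrow X\times I$ to $\hat{c_p}(E,\nabla_t)$ by naturality.

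Next I would decompose the curvature of $\tilde{\nabla}$. Writing $\dot\nabla_t:=\tfrac{d}{dt}\nabla_t$, which is a $1$-form on $X$ with values in $\textbf{gl}_n$ (an endomorphism-valued form), one obtains
$$
\tilde{\nabla}^2 \,=\, \nabla_t^2 \,+\, dt\wedge \dot\nabla_t.
$$
Since $P_p$ is symmetric, multilinear and $dt\wedge dt=0$, expanding gives
$$
P_p(\tilde{\nabla}^2,\ldots,\tilde{\nabla}^2) \,=\, P_p(\nabla_t^2,\ldots,\nabla_t^2) \,+\, p\cdot dt\wedge P_p(\dot\nabla_t,\nabla_t^2,\ldots,\nabla_t^2),
$$
the factor $p$ arising from the $p$ equivalent positions in which the $dt$-term can appear.

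The core of the argument is then to evaluate the differential character on the chain $c\times I$ for an arbitrary $(2p-1)$-cycle $c$ of $X$. Its boundary is $c\times\{1\}-c\times\{0\}$, so the defining relation of a differential character yields, modulo $\Z(p)$,
$$
\hat{c_p}(E,\nabla_1)(c)-\hat{c_p}(E,\nabla_0)(c) \,=\, \int_{c\times I}P_p(\tilde{\nabla}^2,\ldots,\tilde{\nabla}^2).
$$
The summand $P_p(\nabla_t^2,\ldots,\nabla_t^2)$ contains no $dt$ and integrates to zero over the fibre $I$; the remaining summand integrates to $p\int_0^1 P_p(\dot\nabla_t,\nabla_t^2,\ldots,\nabla_t^2)\,dt$, paired against $c$ by Fubini. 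Since this holds for every $(2p-1)$-cycle $c$, the stated equality of differential characters follows.

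The main obstacle is verifying that the fibrewise integration genuinely matches the difference of differential characters — i.e., that the term $P_p(\nabla_t^2,\ldots,\nabla_t^2)$ really drops out under $\int_{c\times I}$ and that no integrality correction is required. This rests on the fact that $\tilde{c_p}(\tilde E,\tilde\nabla)$ has integral periods together with the chain-level identity for cylinders; once this naturality/Stokes bookkeeping is secured, the algebraic expansion of the curvature does the rest.
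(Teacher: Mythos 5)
Your argument is correct: the paper itself offers no proof beyond the citation ``See \cite[Proposition 2.9, p.61]{Ch-Sm}'', and your cylinder construction --- the connection $d_t+\nabla_t$ on $q^*E$ over $X\times I$, the curvature expansion $\tilde{\nabla}^2=\nabla_t^2+dt\wedge\dot\nabla_t$, and evaluation of the differential character's defining relation on $c\times I$ --- is precisely the standard argument underlying that cited proposition. The bookkeeping you flag (the $dt$-free term vanishing under fibre integration, naturality under $i_t$, and the identity holding only modulo $\Z(p)$) is exactly right and completes the proof.
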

\begin{proof}
See \cite[p.61, Proposition 2.9]{Ch-Sm}.
\end{proof}

In particular, if $\{\nabla_t\}_{t\in I}$ is a family of flat connections then the degree $(2p-1)$-form 
\begin{equation}\label{etapform}
\eta_p:=  p. \int_{0}^1 P_p(\f{d}{dt}\nabla_t ,\nabla_t^{2},..., \nabla_t^2)dt
\end{equation}
is identically zero, when $p\geq 2$. This implies that
$$
\hat{c_p}(E,\nabla_1)\,=\,\hat{c_p}(E,\nabla_0).
$$
This gives the \textit{rigidity} of Chern-Simons classes
\begin{equation}\label{rigidCS}
CS_p(E,\nabla_1)\,=\,CS_p(E,\nabla_0)\,\in\, H^{2p-1}(X,\C/\Z)
\end{equation}
whenever $p\geq 2$.

\subsection{Canonical invariants lifting the difference of Chern-Simons classes}

We motivate our constructions by looking at the following situation.

Recall the coefficient sequence:
$$
0\rar \Z(p) \rar \C \rar \C/\Z(p) \rar 0.
$$
The associated long exact cohomology sequence is given by:
$$
\rar H^k(X,\C/\Z(p)) \rar H^{k+1}(X,\Z(p)) \rar H^{k+1}(X,\C) \rar H^{k+1}(X, \C/\Z(p)) \rar.
$$
Suppose $k+1=2p$, and $(E,\nabla)$ is a flat connection then the Chern-Simons class $CS_p(E,\nabla)$ is a canonical lifting of the (torsion) integral Chern class $c_p(E)$. Here we note that the de Rham Chern class
$c_p(E,\nabla)\in H^{2p}(X,\C)$ is identically zero, for $p\geq 1$.
We noticed in  proof of Theorem \ref{canDC}, that the differential cohomology is introduced as an intermediate object in the above cohomology sequence where the canonical invariants lie and they lift the de Rham Chern form.

Suppose $k+1=2p-1$, and we are given a path of flat connection $\gamma:=\{\nabla_t\}_{t\in I}$.  
The difference of Chern-Simons classes, using the rigidity formula in \eqref{rigidCS}, gives us
$$
CS_p(E,\nabla_1)\,-\,CS_p(E,\nabla_0)\,=\,0\,\in\, H^{2p-1}(X,\C/\Z(p)).
$$

This difference is characterised by the eta-form $\eta_p$, in \eqref{etapform}.
Hence, we would like to construct a canonical lifting in $H^{2p-2}(X,\C/\Z(p))$, via differential characters. The differential character should be a canonical lifting of the eta form $\eta_p$, upto an exact form.
We do not write a formula for the lift but rather consider suitable lifting of closely associated forms $TP_p$, which we will define in the next section.

\subsection{Integration along fibres}

Suppose $\pi:X\times T \rar X$ and $E$ is a vector bundle on $X$. Suppose $\tilde{\Theta}$ is a smooth connection on $\pi^*E$, $T$ is a smooth compact manifold of dimension $r$ or the standard $r$-simplex $\Delta^r$. Then there is a  map given by integration along the fibres
\begin{equation}\label{homotopyform}
B:A^{k}(X\times T)\rar A^{k-r}(X),\, \alpha \mapsto \int_T \alpha.
\end{equation}
 Similarly, there is a map (given by proper pushforward)
\begin{equation}\label{homcoh}
B: H^k(X\times T,\C/\Z) \rar H^{k-r}(X, \C/\Z).
\end{equation}

We note that the above two maps in  \eqref{homotopyform}, \eqref{homcoh} give a differential character $\hat{B}(\hat{f})$ on $X$ for a  differential character $\hat{f}$ on $X\times T$, under some conditions on $T$.
See \cite{BB14}, \cite[Proposition 2.1]{Freed}. 

However, in our situation $T$ is an $r$-simplex and $X\times \Delta^r$ is a manifold with boundary and corners, and the  maps on differential cohomology defined by above authors seem inadequate. 
 Hence, we restrict to an appropriate range where the Chern-forms on $X\times \Delta^r$ vanish identically, and we need to only apply the map in \eqref{homcoh}.

\subsection{Stokes' theorem}

We recall Stokes' theorem, which we will use in proof of Theorem \ref{higherhomotopy}.

Suppose $\pi: X\times T\rar X$ is the first projection and $r=dim(T)$ (as in previous section). If $\eta$ is a degree $q$-differential form on $X\times T$ then Stokes' theorem asserts the following:
\begin{equation}\label{stokes}
d\,\int_{T} \eta \,=\, \int_{T} d\eta - (-1)^{q-r}\int_{\partial T}\eta.
\end{equation}


\section{(Co)Homological invariants of the simplicial set of flat connections}

\subsection{Simplicial set $\cD(E)$ of sequence of flat connections}\label{simplicialDE}

We start with the approach of M. Karoubi \cite{Karoubi}.

Now suppose we form the simplicial set $\cD(E)$ formed by sequences of \textit{flat} connections, under the Gauge group transformation. We would like to understand the topology of $\cD(E)$, in terms of giving suitable maps on homology groups of $\cD(E)$.

Given $E$ on $X$, we associate a simplicial abelian group $M_*$: let $M_r$ be the free abelian group generated by the sequences $(D^0,D^1,...,D^r)$ where $D^i$ are smooth flat connections on $E$.
If $\phi: [s]\rar [r]$ is an increasing function, the map
$$
\phi^*:M_r \rar M_s
$$
is defined by 
$$
\phi^*(D^0,...,D^r)=(\nabla^0,\nabla^1,...,\nabla^s)
$$
with $\nabla^i:=D^{\phi(i)}$.
The gauge group $\G$ of automorphisms of $E$ acts on $M_*$ as follows:
$$
\phi^*(D^0,...,D^r).g= (g^*D^0,g^*D^1,...,g^*D^r).
$$
The quotient $N_*:=M_*/\G$ is also a simplical abelian group.
We denote $\HH_r(E)$ the homology groups of $N_*$.

We pose the following question:
\begin{question}\label{higherhom}
Is there a non-zero group homomorphism, for each $r\geq 1$ :
$$
\rho_{p,r}: \HH_r(\cD(E)) \rar \widehat{H^{2p-r}}(X).
$$
\end{question}

This question is motivated by the following computations.

Let $\Delta^r$ be the standard $r$-simplex with the variables $t=(t_0,t_1,...,t_r)$ such that $t=\sum_{i=0}^rt_i$. Let $\tilde{D}$ be the connection 
$$
\tilde{D}:= t_0D^0+t_1D^1+...+ t_rD^r
$$
on $q^*E$, where $q:\Delta^r\times X\rar X$ is the second projection.

For $r\geq 1$ and $r\leq p$, denote
$$
TP(\tilde{D})_{p,r}:= \int_{\Delta^r} P_p(\tilde{D}^2,...,\tilde{D}^2).
$$
This is a differential form of degree $2p-r$ on $X$.

We recall the following formula (see \cite[Theoreme 3.3,p.67]{Karoubi}):
\begin{equation}\label{Kform}
dTP(\tilde{D})_{p,r}= -\sum_{i=0}^r (-1)^i TP(\tilde{D}_i)_{p,r-1}.
\end{equation}

When $r=1$, then we have
$$
dTP(\tilde{D})_{p,1}= c_p(D^1)- c_p(D^0).
$$
This is the difference of Chern forms.  Since $D^0$ and $D^1$ are flat connections, the Chern forms are zero.   In other words, $TP(\tilde{D})_{p,1}$ is a closed form. 

When $r>1$, it is not immediate that the form $TP_{p,r}$ is a closed form, unless all $TP_{p,r-1}$ are zero or cancel out (using \eqref{Kform}). To rectify this situation, we define a canonical differential character lifting $TP(\tilde{D})_{p,r}$, in the next subsection.

\begin{remark}\label{referee}
Referee's remarks: It is a very interesting idea that we could approximate a general family of connections by simplicial connections of the form given above,i.e. of the type:
let $\Delta^r$ be the standard $r$-simplex with the variables $t=(t_0,t_1,...,t_r)$ such that $t=\sum_{i=0}^rt_i$. Let $\tilde{D}$ be the connection 
$$
\tilde{D}:= t_0D^1+t_1D^1+...+ t_rD^r
$$
on $q^*E$. Here $q:\Delta^r\times X\rar X$ is the second projection.
If the $D^j$ are flat there is no guarantee that their linear combinatios will be flat, and poses some problems in applying it directly in Theorem \ref{mainthm}. If they are flat and nearby, then the linear combinations will be approximately flat, so this should give a nice way of approximating some integrals. 
\end{remark}

We now proceed to answer Question \ref{higherhom}, partly.

We consider a general connection $\tilde{D}$ on $q^*E$ over $\Delta^r\times X$.  These may be organized into a simplicial set $\cD(E)$. Namely, the $r$-simplices of this simplicial set are connections over $\Delta^r\times X$, and the face maps are given by restriction (degeneracies given by pullback).

We denote $\tilde{D_i}$ the restriction of $\tilde{D}$ on the $i$-th face of  $\Delta^r\times X$. We consider this simplicial set, so as to directly  obtain the required map on homology of moduli space of flat connections, in Theorem \ref{mainthm}.

We note that the formula in \eqref{Kform}, 
\begin{equation*}
dTP(\tilde{D})_{p,r}= -\sum_{i=0}^r (-1)^i TP(\tilde{D}_i)_{p,r-1}
\end{equation*}
holds for a general connection $\tilde{D}$ over $\Delta^r \times X$. The computation given in \cite[Theoreme 3.3, p.67]{Karoubi} remains valid for $\tilde{D}$, which is an application of Stokes' theorem.

\subsection{Higher homotopy maps}

Denote $\mathcal{Z}_r(\cD(E))$ the group of  $r$-cycles of the simplicial set $\cD(E)$, i.e., to say formal sums of simplices whose boundary is zero.

Then we have the following:

\begin{theorem}\label{higherhomotopy}
With notations as in previous subsection, and
when $p>r\geq 1$, there are well-defined maps
$$
\rho_{p,r}:\mathcal{Z}_r(\mathcal{D}(E))\rar \widehat{H^{2p-r}}(X).
$$

\end{theorem}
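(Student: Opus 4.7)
The target space $\widehat{H^{2p-r}}(X)$, by \eqref{diffcoh}, is the group of pairs $(f,\alpha)$ with $\alpha$ a closed $(2p-r)$-form on $X$ having integral periods, $f:Z_{2p-r-1}(X)\to \C/\Z$ a homomorphism, and $f(\partial\tau)\equiv\int_\tau\alpha\pmod{\Z}$ on every $(2p-r)$-chain $\tau$. The plan is to assign such a pair, linearly in $c$, to each cycle $c=\sum_j n_j\tilde D^{(j)}\in \mathcal{Z}_r(\cD(E))$ by fibre-integrating the Cheeger-Simons datum of each $\tilde D^{(j)}$ over the simplex $\Delta^r$. For an individual $r$-simplex this assignment is ill-defined because of boundary-and-corner contributions of $\Delta^r\times X$; those contributions are exactly killed by the simplicial cycle condition $\partial c=0$.

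I first construct the curvature form. Set $\alpha(c):=\sum_j n_j\,TP(\tilde D^{(j)})_{p,r}$. Applying formula \eqref{Kform} termwise,
\[
d\alpha(c)=-\sum_{i=0}^r(-1)^i\sum_j n_j\,TP(\tilde D^{(j)}_i)_{p,r-1},
\]
and the cycle condition, which reads $\sum_{j,i}n_j(-1)^i\tilde D^{(j)}_i=0$ in the free abelian group, forces the right-hand side to vanish, so $\alpha(c)$ is closed. Each $TP(\tilde D^{(j)})_{p,r}$ is the fibre integral of a Chern form with integral periods along $\Delta^r$, and the same cycle condition allows the individual simplices (each with boundary) to be spliced into a closed $r$-chain of fibres so the fibre integration becomes unobstructed; integrality of the periods of $\alpha(c)$ follows.

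For the $\C/\Z$-valued functional I invoke Theorem \ref{canDC} on $(q^*E,\tilde D^{(j)})\to\Delta^r\times X$, with $q:\Delta^r\times X\to X$ the projection, to obtain $\hat c_p(q^*E,\tilde D^{(j)})\in\widehat{H^{2p}}(\Delta^r\times X)$, whose curvature form is the Chern form $P_p((\tilde D^{(j)})^2,\dots,(\tilde D^{(j)})^2)$. Given $z\in Z_{2p-r-1}(X)$, choose any $(2p-r)$-chain $\tau\subset X$ with $\partial\tau=z$; then $\Delta^r\times\tau$ is a $(2p)$-chain in $\Delta^r\times X$ whose boundary decomposes as $\partial\Delta^r\times\tau+(-1)^r\Delta^r\times z$. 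The differential character axiom applied to $\hat c_p(q^*E,\tilde D^{(j)})$ on $\Delta^r\times\tau$, summed against $c$ with $\partial c=0$, causes the face contributions $\hat c_p(q^*E,\tilde D^{(j)}_i)(\Delta^{r-1}_i\times\tau)$ to telescope to zero, leaving a well-defined element $f(c)(z)\in\C/\Z$ extracted from $(-1)^r\sum_j n_j\,\hat c_p(q^*E,\tilde D^{(j)})(\Delta^r\times z)$; its independence of the choice of $\tau$ follows from the integral-periods property of $\alpha(c)$ established above. The compatibility $f(c)(\partial\tau)\equiv\int_\tau\alpha(c)\pmod{\Z}$ is then packaged by the auxiliary $(2p-r-1)$-form $\omega(\tilde D)_{p,r}$ announced in the introduction, obtained by fibre-integrating the universal Chern-Simons transgression form for $\tilde D$ along $\Delta^r$, whose Stokes' identity
\[
d\omega(\tilde D)_{p,r}=-\sum_{i=0}^r(-1)^i\omega(\tilde D_i)_{p,r-1}-(-1)^{2p-r-1}TP(\tilde D)_{p,r},
\]
summed against $c$, is exactly the desired compatibility.

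The main obstacle is the bookkeeping of the boundary and corner contributions of $\Delta^r\times X$, which is why the existing fibre-integration theory for differential cohomology on manifolds with boundary (\cite{Freed},\cite{BB14}) cannot be invoked as a black box: the fibres $\Delta^r$ have boundary and corners, and one instead has to pass through the $\omega$-identity above (derived from Stokes' theorem \eqref{stokes}) and exploit $\partial c=0$ at every step. This is also where the hypothesis $p>r$ enters: it places the target degree $2p-r$ and the auxiliary transgression degree $2p-r-1$ in the range where the Stokes manipulation and the identification with a differential character are both consistent, and where \eqref{Kform} and the $\omega$-identity have the same signs lining up.
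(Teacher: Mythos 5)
Your construction of the curvature component $\alpha(c)=\sum_j n_j\,TP(\tilde D^{(j)})_{p,r}$, and the observation that \eqref{Kform} plus $\partial c=0$ forces $d\alpha(c)=0$, is exactly what the paper does. The gap is in your construction of the $\C/\Z$-valued functional. A differential character is a homomorphism on \emph{cycles} $Z_{2p-1}(\Delta^r\times X)$ only, and the chain $\Delta^r\times z$ is not a cycle: its boundary contains $\partial\Delta^r\times z\neq 0$. So the quantity $\hat c_p(q^*E,\tilde D^{(j)})(\Delta^r\times z)$ is undefined, and the proposed telescoping cannot be carried out term by term, since each face contribution $\hat c_p(q^*E,\tilde D^{(j)}_i)(\Delta^{r-1}_i\times\tau)$ is again an evaluation on a non-cycle. (Also, for a general cycle $z$ there need not exist $\tau$ with $\partial\tau=z$, so the auxiliary chain you use to define $f(c)(z)$ may not exist.) What is needed is a cochain defined on \emph{all} chains of $X$, i.e.\ an actual differential form on $X$ whose integral realizes the functional; the character-theoretic evaluation cannot supply this.

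That form is precisely $\omega(\tilde D)_{p,r}$, which you invoke at the end but do not construct, and your description of it (fibre integration over $\Delta^r$ of ``the universal Chern--Simons transgression form'') is not what the paper does and is incomplete as stated: a transgression form requires a reference connection, and your identity \eqref{Oform} would require compatible reference choices on all faces, which you do not verify. The paper instead chooses a path $\{\tilde D^s\}_{s\in I}$ of $r$-simplices of connections joining the trivial simplex to $\tilde D$, sets $\omega(\tilde D)_{p,r}:=\int_I TP(\tilde D^s)_{p,r}$, derives \eqref{Oform} from Stokes' theorem \eqref{stokes} together with \eqref{Kform}, and then defines the functional directly as $c'\mapsto -\int_{c'}\omega(\sigma)_{p,r}$; the coboundary computation $\langle c,\partial\tilde D\rangle=-\langle\partial c,\tilde D\rangle+(-1)^{2p-r-1}\int_c TP(\tilde D)_{p,r}$ then shows that for a cycle $\sigma$ this cochain has coboundary $(-1)^{2p-r-1}TP(\sigma)_{p,r}$, which is the differential-character condition. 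Since the construction of $\omega$ and this cochain-level bookkeeping constitute essentially the whole proof, your argument as written does not close.
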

\begin{proof}

With notations as in \S \ref{simplicialDE}, 
we define the 'fiber integration' element of the differential character of $(q^*E,\tilde{D})$, on $ X$:
$$
\widehat{TP(\tilde{D})}_{p,r}= (tp(\tilde{D})_{p,r}, TP(\tilde{D})_{p,r})
$$
This element is well-defined if $tp(\tilde{D})_{p,r}: Z_{2p-r-1}(X)\rar \C/\Z(p) $ is a degree $(2p-r-1)$ cochain whose coboundary is the cochain corresponding to the degree $(2p-r)$ form $TP(\tilde{D})_{p,r}$. In particular, we will show that if $\sigma$ is a cycle in $\cD(E)$, then the differential character $\widehat{TP(\sigma)}$ lifting the associated differential form $TP(\sigma)$, is well-defined.

For this purpose, we now write
$$
tp(\tilde{D})_{p,r}: Z_{2p-r-1}(X)\rar \C/\Z(p)
$$
as the map $c' \mapsto \int_{c'} \omega(\tilde{D})_{p,r}$, for a canonical form $\omega(\tilde{D})_{p,r}\in A^{2p-r-1}(X)$, such that $d\omega(\tilde{D})_{p,r}= TP(\tilde{D})_{p,r}$ (upto certain forms depending on the $r-1$-faces of $\Delta^r$). 

We proceed to define the form $\omega(\tilde{D})_{p,r}$.

When $r=1$, we can make this explicit by choosing a path $\psi_s:=\{\tilde{D}^{s}\}$ of $1$-simplices associated to  length one sequences, such that when $s=1$, $\psi_1=\tilde{D}$ and when $s=0$, $\psi_0= \tilde{D}^0$. Here $\tilde{D}^{0}$ is the trivial $1$-simplex.
 Then define, when $p>1$:
$$
\omega(\tilde{D})_{p,1} := \int_I TP(\tilde{D}^{s})_{p,1}=\int_I \int_{I} P_p(\tilde{D}^{s},...,\tilde{D}^{s}).
$$
By Stokes' theorem (see \eqref{stokes}), we have
$$
d\omega(\tilde{D})_{p,1}= TP(\tilde{D})_{p,1}- TP(\tilde{D}^{0})_{p,1} = TP(\tilde{D})_{p,1}.
$$

In fact  when $p>r\geq 1$, we similarly define:
$$
\omega(\tilde{D})_{p,r}:= \int_{I} TP(\tilde{D}^{s})_{p,r}
$$
where $\psi_s:=\{\tilde{D}^{s}\}$ is a path of $r$-simplices, where $\psi_1=\tilde{D}$ and $\psi_0$ is the trivial $r$-simplex.

Recall that the $i$-th face of the $r$-simplex $\tilde{D}^{s}$ is denoted by $\tilde{D}^{s}_{i}$, which is an $(r-1)$-simplex. Then notice, by Stokes' theorem \eqref{stokes},
\begin{eqnarray*}
d\omega(\tilde{D})_{p,r}  & = & \int_{ I} dTP(\tilde{D}^{s})_{p,r}- (-1)^{2p-r-1} [TP(\tilde{D})_{p,r}-0]\\
                                & = & - \int_I \sum_{i=0}^r(-1)^i TP(\tilde{D}^{s}_i)_{p,r-1} - (-1)^{2p-r-1}TP(\tilde{D)}_{p,r}, \m{ use }\eqref{Kform}\\
                                & =& -\sum_{i=0}^{r}(-1)^i\int_I TP(\tilde{D}^{s}_{i})_{p,r-1} -(-1)^{2p-r-1} TP(\tilde{D})_{p,r}  .\\
\end{eqnarray*}

Hence the following identity  holds: 
\begin{equation}\label{Oform}
d\omega(\tilde{D})_{p,r}= -\sum_{i=0}^r (-1)^i\omega(\tilde{D}_i)_{p,r-1}  -(-1)^{2p-r-1} TP(\tilde{D})_{p,r}.
\end{equation}

Hence, given a $(2p-r-1)$-cycle $c$ on $X$, if we denote 
$$
<c,\tilde{D}>\,:= -\int_c \omega(\tilde{D})_{p,r} 
$$
 then we have the equality:
\begin{eqnarray*}
<c,\partial \tilde{D}> &=& - \int_c \sum_{i=0}^{r}(-1)^i\omega(\tilde{D}_i)_{p,r-1} \\
                          &=& \int_c d\omega(\tilde{D})_{p,r} +(-1)^{2p-r-1} TP(\tilde{D})_{p,r} ,\m{ use } \eqref{Oform} \\
 & = & -\int_{\partial c} \omega(\tilde{D})_{p,r}  + \int_c  (-1)^{2p-r-1} TP(\tilde{D})_{p,r}  , \m{ by Stokes' theorem }   \\
  &= & -<\partial c,\tilde{D}>  + \int_c (-1)^{2p-r-1} TP(\tilde{D})_{p,r} .
 \end{eqnarray*}

Now replacing $\tilde{D}$ by a $\sigma$, which is a finite linear combination of $r$-simplices $\tilde{D}$,
we deduce:

1) if $\sigma$ is a cycle in $\cD(E)$ then the singular cochain $c\mapsto <c,\sigma>$ has coboundary 
$(-1)^{2p-r-1}TP(\sigma)_{p,r}$ on $X$. In other words, it corresponds to the differential character
$(<.,\sigma>, (-1)^{2p-r-1}TP(\sigma)_{p,r})$ (see following proof).

2) if $\sigma$ is a boundary $\sigma=\partial \sigma'$, then the singular cochain $c\mapsto <c,\sigma>$ is  equal to
\begin{eqnarray*}
<c,\partial\sigma'> & =& -<\partial c,\sigma'> +\int_c (-1)^{2p-r-1} TP( \sigma').
\end{eqnarray*}
If $c$ is a cycle and if the integral is zero then the quantity on the right hand side is zero. 
                               
Hence from 1), we obtain a group homomorphism on the group of $r$- cycles of the simplicial set $\cD(E)$ as follows:
 
 for $p>r\geq 1$, define
\begin{eqnarray*}
\rho_{p,r}: \mathcal{Z}_r(\cD(E)) & \rar & \widehat{H^{2p-r}}(X) \\
                  \sigma & \mapsto & (<.,\sigma>, (-1)^{2p-r-1}TP(\sigma)_{p,r}).\\
\end{eqnarray*}
This is well-defined since the coboundary on group of cochains $C^\bullet(X)$ on $X$,
$$
\partial: C^{2p-r-1}(X)\rar C^{2p-r}(X)
$$
fulfils, by 1): 
$$
\partial (<.,\sigma>)= \int_{-}(-1)^{2p-r-1}TP(\sigma)_{p,r}.
$$
Since the Chern forms $P_p(\tilde{D}^s,...,\tilde{D}^s)$ over $\Delta^r\times X$ have integral periods, the forms $TP(\tilde{D}^s)_{p,r}$ also have integral periods, and are closed forms when $\tilde{D}^s$ is replaced by a cycle $\sigma$. 

\end{proof}

To deduce a corresponding map on homology of moduli space of flat connections, we first note the following proposition. In particular, the integral in 2) above will vanish and the map $\rho_{p,r}$ will descend on the homology.

When $r<p$, it was pointed by P. Deligne that we could apply integration along  fibres, directly on $H^{2p-1}(\Delta^r\times X,\C/\Z(p))$, since the Chern forms of $\tilde{D}$ vanish in this range. We make this precise as follows.

\begin{proposition}\label{flatpropn}(Deligne)
 Assume for each fixed $t$ as above, the connection $\tilde{D}^t$ is a flat connection.
(In particular, this assumption will hold when  $\Delta^r$ is an $r$-simplex mapping into the moduli space $M_X(k)$ of rank $k$ flat connections on $X$.)
Then there are well-defined cohomology classes, for $r<p$:
$$
CS(\tilde{D})_{p,r} \in H^{2p-r-1}(X,\C/\Z(p)).
$$
\end{proposition}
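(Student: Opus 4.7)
The plan is to exploit a bidegree argument for the curvature $\tilde{D}^2$ on $\Delta^r \times X$, showing that its $p$-th Chern form fiber-integrates to zero; I will then define $CS(\tilde{D})_{p,r}$ by pushing forward the Chern-Simons differential character $\hat{c}_p(q^*E,\tilde{D})$ along $q$, and use the vanishing to confine the result to the $\C/\Z$-cohomology subgroup of $\widehat{H^{2p-r}}(X)$.

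First I decompose forms on $\Delta^r \times X$ by bidegree $(i,j)$, with $i$ the $\Delta^r$-degree and $j$ the $X$-degree. The restriction $\tilde{D}^t$ to $\{t\}\times X$ is the pullback of $\tilde{D}$ along the inclusion $i_t:X\hookrightarrow \{t\}\times X$, and $i_t^*$ annihilates every form of positive fiber degree, so $(\tilde{D}^t)^2$ equals the $(0,2)$-bidegree component of $\tilde{D}^2$ restricted to the slice. Flatness of every $\tilde{D}^t$ therefore forces the $(0,2)$-component of $\tilde{D}^2$ to vanish identically; equivalently $\tilde{D}^2$ lies in $F^1\Omega^2(\Delta^r\times X)$ for the fiber filtration $F^\bullet$ defined by fiber degree. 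Multiplicativity of this filtration gives
$$P_p(\tilde{D}^2,\ldots,\tilde{D}^2)\,\in\, F^p\Omega^{2p}(\Delta^r\times X),$$
so the Chern form has fiber degree at least $p$. Since $r<p$, its fiber-degree-$r$ part is zero and hence
$$TP(\tilde{D})_{p,r}\,=\,\int_{\Delta^r}P_p(\tilde{D}^2,\ldots,\tilde{D}^2)\,=\,0\,\in\,\Omega^{2p-r}(X).$$

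Next I apply Theorem \ref{canDC} to $(q^*E,\tilde{D})$ on $\Delta^r\times X$ to obtain $\hat{c}_p(q^*E,\tilde{D})\in\widehat{H^{2p}}(\Delta^r\times X)$ with curvature $P_p(\tilde{D}^2,\ldots,\tilde{D}^2)$, and push it forward along $q$. Following Deligne's remark preceding the statement, the vanishing of the fiber integral of the curvature lets me bypass the full Baer-Becker differential-cohomology pushforward (flagged as inadequate by the paper in view of the boundary and corners of $\Delta^r$) and instead use the ordinary proper pushforward
$$q_*\,:\,H^{2p-1}(\Delta^r\times X,\C/\Z(p))\,\longrightarrow\,H^{2p-r-1}(X,\C/\Z(p)).$$
Concretely, represent $\hat{c}_p(q^*E,\tilde{D})$ by a pair $(f,P_p(\tilde{D}^2,\ldots,\tilde{D}^2))$ as in the exact sequence \eqref{diffcoh}; fiber-integration of the $\C/\Z$-valued cochain $f$ produces a cochain on $X$ whose coboundary is $\int_{\Delta^r}P_p(\tilde{D}^2,\ldots,\tilde{D}^2)=0$ up to a boundary contribution from $\partial\Delta^r$. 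Setting $CS(\tilde{D})_{p,r}:=q_*\hat{c}_p(q^*E,\tilde{D})$ then yields the desired class, with independence of cochain representative following from standard differential-character manipulations.

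The main obstacle is controlling the $\partial\Delta^r$ boundary term, which a priori enters the coboundary of the fiber-integrated cochain via Stokes' theorem \eqref{stokes}. The crucial point is that each face of $\Delta^r$ is again a simplex parametrizing flat connections (by restriction), so the same bidegree vanishing applies on each face: for every $\tilde{D}_i$ one has $P_p(\tilde{D}_i^2,\ldots,\tilde{D}_i^2)\in F^p\Omega^{2p}$ on $\partial_i\Delta^r\times X$, and since $r-1<p$ the analogous integral over each face is also zero. Thus the boundary term collapses, the fiber-integrated cochain defines a genuine class in $H^{2p-r-1}(X,\C/\Z(p))$, and $CS(\tilde{D})_{p,r}$ is well-defined.
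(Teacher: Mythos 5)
Your proposal follows the same route as the paper's proof: show that the Chern form of $\tilde{D}$ on $\Delta^r\times X$ vanishes, so that the Chern--Simons character of $(q^*E,\tilde{D})$ is an honest class in $H^{2p-1}(\Delta^r\times X,\C/\Z(p))$, and then apply the pushforward \eqref{homcoh}. Your bidegree argument is a more careful justification of the paper's one-line local claim that ``each term of this form will have a $dx\wedge dx$'': flatness of every slice connection kills the $(0,2)$-component of $\tilde{D}^2$, so $P_p(\tilde{D}^2,\ldots,\tilde{D}^2)$ has $\Delta^r$-degree at least $p$. But note that this argument gives you strictly more than the conclusion you draw from it: since the fiber $\Delta^r$ has dimension $r<p$, any form of fiber degree at least $p$ is identically zero, so $P_p(\tilde{D}^2,\ldots,\tilde{D}^2)=0$ on all of $\Delta^r\times X$, not merely of vanishing fiber integral. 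That stronger statement is exactly what the paper uses, and it is what you need in order to regard $\hat{c}_p(q^*E,\tilde{D})$ as an element of $H^{2p-1}(\Delta^r\times X,\C/\Z(p))$ before pushing forward.

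The one step that does not work as written is your final paragraph. The boundary contribution to the coboundary of the fiber-integrated cochain is the integral over $\partial\Delta^r$ of the cochain $f$ itself, not of the curvature form; so the vanishing of the curvature integrals over the faces does not make that term collapse. You do not need this cochain-level bookkeeping at all: once you record that the curvature form vanishes identically, the differential character is a genuine $\C/\Z(p)$-cohomology class and you can invoke the cohomological pushforward \eqref{homcoh} directly, which is precisely the paper's argument. (Both your proof and the paper's then rest on the same foundational point, acknowledged in the paper, that \eqref{homcoh} must be given meaning for the fiber $\Delta^r$, a manifold with boundary and corners.)
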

\begin{proof} Consider the degree $p$, $GL_n$-invariant polynomial $P_p$ (see \S \ref{convention}).

By assumption, for  each $t\in \Delta^r$, $\tilde{D}_t$ is flat and hence for $p>r\geq 1$, the $p$-th Chern form
$$
P_p(\tilde{D}^2,...,\tilde{D}^2)=0. 
$$
Locally, this is easy to check since each term of this form will have a $dx\wedge dx$. 

Hence we can apply the pushforward map (see \eqref{homcoh}) on the Chern-Simons class 
$$
{CS_p}(q^*E,\tilde{D})\in H^{2p-1}(\Delta^r \times X,\C/\Z(p)),
$$
 to  obtain a well-defined class 
 $$
 CS(\tilde{D})_{p,r}\in H^{2p-r-1}(X, \C/\Z(p)).
 $$ 
\end{proof}

\begin{remark}\label{flatdescent} The hypothesis in above proposition is same as saying that a connnection $\tilde{D}$ over $\Delta^r \times X\rar \Delta^r$ is relatively flat. 
We note that in the above situation the form $TP(\tilde{D})_{p,r}$ is zero, in particular the integral in 2) (in proof of Theorem \ref{higherhomotopy}) is zero. This property will enable us to descend the restriction of the map $\rho_r$ in  Theorem \ref{higherhomotopy}, to the homology of  simplicial set  parametrizing flat connections.
\end{remark}

\section{Cohomological invariants of moduli space of flat connections }\label{mapHH}


Suppose $M_X(n)$ denotes the moduli space of flat connections of rank $n$, on $X$.

Here, $M_X(n)$ is the topological type of Artin stack (see \cite{Noohi}), of space of isomorphism classes of rank $n$  representations of the fundamental group of $X$. Equivalently, the cohomology in the following statements, will mean the equivariant cohomology of the representation space, for the conjugation action of the group by choice of frames. 

We recall the Chern-Simons theory on $X$ and the rigidity property of the Chern-Simons classes  in \eqref{rigidCS}.
The rigidity property of the  Chern-Simons classes can be thought of as giving a map:
$$
\pi_0(M_X(n)) \rar \bigoplus_{p\geq 2} H^{2p-1}(X, \C/\Z(p)),
$$
on the connected components of the moduli space $M_X(n)$.

We would like to express the construction of the higher-order invariants associated to a variation of flat connections (see previous section), as a map on the higher homology groups of the moduli space. 

As a consequence of Proposition \ref{flatpropn} and Theorem \ref{higherhomotopy}, we  obtain the following map, on the homology groups of $M_X(n)$.
\begin{theorem}\label{mainthm}
There is a group homomorphism, for $r\geq 1$:
$$
\rho'_{r}: \HH_r (M_X(n)) \rar \bigoplus_{p > r} H^{2p-r-1}(X, \C/\Z(p)).
$$
\end{theorem}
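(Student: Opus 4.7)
The plan is to apply Theorem \ref{higherhomotopy} to the sub-simplicial set $\cD^{\mathrm{flat}}(E)\subseteq \cD(E)$ whose $r$-simplices are those connections $\tilde D$ over $\Delta^r\times X$ that are relatively flat in the sense of Proposition \ref{flatpropn} (i.e.\ $\tilde D_t$ is flat for every $t\in\Delta^r$). A continuous singular $r$-simplex $\Delta^r\rar M_X(n)$ pulls back the universal flat connection (after a choice of frame) to such a $\tilde D$, in a way that is compatible with faces and degeneracies. Taking an equivariant model for the Artin stack $M_X(n)$, as indicated after the statement of the theorem, one obtains a chain map from the singular complex computing $\HH_\ast(M_X(n))$ to the complex of normalized chains in $\cD^{\mathrm{flat}}(E)$, and in particular an induced map $\HH_r(M_X(n))\rar \HH_r(\cD^{\mathrm{flat}}(E))$.

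For a cycle $\sigma\in\mathcal Z_r(\cD^{\mathrm{flat}}(E))$, Theorem \ref{higherhomotopy} produces the differential character
$$
\rho_{p,r}(\sigma)=\bigl(\langle\,\cdot\,,\sigma\rangle,\,(-1)^{2p-r-1}TP(\sigma)_{p,r}\bigr)\in\widehat{H^{2p-r}}(X).
$$
By Proposition \ref{flatpropn} and Remark \ref{flatdescent}, the Chern form $P_p(\tilde D^2,\ldots,\tilde D^2)$ vanishes identically on $\Delta^r\times X$ for every simplex in the support of $\sigma$, so its fiber integral $TP(\sigma)_{p,r}$ is identically zero on $X$. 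By the exact sequence \eqref{diffcoh}, the image of $\rho_{p,r}(\sigma)$ therefore lies in the subgroup $H^{2p-r-1}(X,\C/\Z(p))\subseteq \widehat{H^{2p-r}}(X)$; under this identification it agrees with the linear extension of $CS(\tilde D)_{p,r}$ from Proposition \ref{flatpropn}.

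To descend to homology, suppose $\sigma=\partial\sigma'$ with $\sigma'\in C_{r+1}(\cD^{\mathrm{flat}}(E))$. Step 2) of the proof of Theorem \ref{higherhomotopy} gives, for every $(2p-r-1)$-cycle $c$ on $X$,
$$
\langle c,\sigma\rangle=-\langle\partial c,\sigma'\rangle+\int_c(-1)^{2p-r-1}TP(\sigma')_{p,r+1}.
$$
Since $\sigma'$ is relatively flat, Remark \ref{flatdescent} again forces $TP(\sigma')_{p,r+1}\equiv 0$ on $X$, and $\partial c=0$ kills the first term. Hence $\langle\,\cdot\,,\sigma\rangle$ vanishes on every cycle and represents zero in $H^{2p-r-1}(X,\C/\Z(p))$. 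Consequently $\rho_{p,r}$ restricted to $\mathcal Z_r(\cD^{\mathrm{flat}}(E))$ descends to a homomorphism $\HH_r(\cD^{\mathrm{flat}}(E))\rar H^{2p-r-1}(X,\C/\Z(p))$; precomposing with $\HH_r(M_X(n))\rar \HH_r(\cD^{\mathrm{flat}}(E))$ and summing over $p>r$ yields the desired $\rho'_r$.

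The main obstacle I expect is the first, comparison, step: making the map $\HH_r(M_X(n))\rar \HH_r(\cD^{\mathrm{flat}}(E))$ genuinely precise. Because $M_X(n)$ is only described as the topological type of an Artin stack, one must work with an equivariant chain model (say, the conjugation quotient of the representation space by frame changes) and verify that the assignment of a singular simplex to a relatively flat connection is functorial and independent of the auxiliary choices of frame and gauge. The remaining cohomological content — the vanishing of $TP$ in the flat range and the descent from cycles to homology — is a formal consequence of Theorem \ref{higherhomotopy} together with Proposition \ref{flatpropn}.
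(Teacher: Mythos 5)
Your proposal is correct and follows essentially the same route as the paper: restrict to the sub-simplicial set of relatively flat connections, apply Theorem \ref{higherhomotopy}, use the vanishing of $TP(\sigma')_{p,r+1}$ from Remark \ref{flatdescent} in step 2) to descend to homology, and land in $H^{2p-r-1}(X,\C/\Z(p))$ via the exact sequence \eqref{diffcoh}. The comparison step you flag as the main obstacle is precisely the point the paper also treats only by assertion (it claims $\HH_r(\cD(E)_{fl})\rar\HH_r(\cS(M_X(n)))$ is an isomorphism without detailed justification), so your caution there is well placed.
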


\begin{proof}
Consider the simplicial set $\mathcal{S}:=\mathcal{S}(M_X(n))$ associated to the moduli space $M_X(n)$.
This means, for each $r\geq 0$, we have
$$
\mathcal{S}_r:= \{ f: \Delta^r \rar M_X(n)\}
$$
together with the usual face operators $d_i$ and degeneracy operators $s_i$, for $0\leq i\leq n$, satisfying the simplicial identities.

We now relate the homology of $\cS$ with the homology of $\cD(E)$ from \S \ref{simplicialDE}.
If $\cD(E)_{fl}\subset \cD(E)$ denotes the sub-simplicial set of relatively flat connections $\tilde{D}$ on
$\Delta^\bullet \times X$ then there is an induced map
$$
\mathcal{Z}_r(\cD(E)_{fl}) \rar \mathcal{Z}_r(\cD(E))
$$
on the group of $r$-cycles of $\cD(E)_{fl}$.

Composed with the map in Theorem \ref{higherhomotopy}, we obtain a map:
$$
\rho_{p,r,fl}: \mathcal{Z}_r(\cD(E)_{fl}) \rar \widehat{H^{2p-r}}(X).
$$
Using 2) in proof of Theorem \ref{higherhomotopy}, we have:
if $\sigma$ is a coboundary in $\cD(E)_{fl}$, i.e. $\sigma=\partial \sigma'$, then the singular cochain $c\mapsto <c,\sigma>$ is  equal to
\begin{eqnarray}\label{descentmap}
<c,\partial\sigma'> & =& -<\partial c,\sigma'> +\int_c (-1)^{2p-r-1} TP( \sigma').
\end{eqnarray}
If $c$ is a cycle and if the integral is zero then the quantity on the right hand side is zero. 
            
            By Remark \ref{flatdescent}, when $r<p$, the form $TP(\tilde{D})_{p,r}$ is identically zero, for a relatively flat connection $\tilde{D}$ over $\Delta^r \times X \rar X$.
            This implies that for a coboundary $\sigma=\partial \sigma'$ in $\cD(E)_{fl}$, the associated form $TP(\sigma')$ is identically zero. This is same as saying that $\rho_{p,r,fl}(\sigma)=0$.
            Hence, by \eqref{descentmap}, 
we conclude that the map $\rho_{p,r,fl}$ descends on the homology $\HH_r(\cD(E)_{fl})$. 

For $f\in \cS_r$, there is a projection $\pi:\Delta^r\times X\rar X$,  together with  a universal connection $\tilde{\nabla}$ on $\pi^*E$, such that $\tilde{\nabla}_t$ is a flat connection for any $t\in \Delta^r$. 

 Using arbitrary connections and the associated simplicial set $\cD(E)$, we note that the map $\HH_r(\cD(E)_{fl}) \rar \HH_r(\cS)$ is an isomorphism, which is the same as the homology $\HH_r(M_X(n))$ of the moduli space $M_X(n)$. Hence,
 we deduce a map
\begin{equation}\label{hommap}
\rho'_r:=\oplus_{p>r}\rho_{p,r,fl}:\HH_r(M_X(n)) \rar \bigoplus_{p>r}\widehat{H^{2p-r}}(X).
\end{equation}

However, using  Proposition \ref{flatpropn} and Remark \ref{flatdescent}, we note that the $(2p-r)$-degree form
$$
TP(\tilde{D})_{p,r}\,=\, 0
$$
whenever $1\leq r <p$ and $\tilde{D}$ parametrises flat connections.

This implies that the map $\rho'_r$ in \eqref{hommap} actually maps into the $\C/\Z(p)$-cohomology, i.e., 
$\rho'_r$ factorises as follows:
$$
\rho'_r: \HH_r(M_X(n)) \rar \bigoplus_{p>r}H^{2p-r-1}(X,\C/\Z(p))
$$
whenever $1\leq r<p$.

\end{proof}

We refer to $\rho'_r$, as the higher-order Cheeger-Simons invariants, in degree $r$.

\section{Questions on rationality of (higher) Cheeger-Simons invariants}\label{Rquestion}

Recall, that when $r=0$, and $X$ is a smooth manifold we have the map:
$$
\rho_0: \pi_0 (M_X(n)) \rar H^{2p-1}(X, \C/\Z(p)).
$$
 Cheeger and Simons \cite{Ch-Sm}, raised the following question:

\textit{Question 1}: 
Since the moduli space of representations has countably many connected components, 
 are the classes
$$
\widehat{c_p}(E,\nabla)\,\in\, H^{2p-1}(X,\C/\Z(p))
$$
torsion ?  i.e, takes value in $\Q/\Z$, whenever $p\geq 2$.

For reasonable smooth manifolds $X$, the homology groups of $M_X(n)$ are countably generated, and hence the above question can be extended as follows:

\textit{Question 2}:
 When $r\geq 1$ and $r<p$, does the image of the map $\rho_r$ lie in the torsion subgroup of $\C/\Z(p)$-cohomology of $X$ ?

We note that when $X$ is a smooth complex projective variety, Question 1  (i.e., when $r=0$) was answered positively by A. Reznikov \cite{Rez}, and partial results in the quasi-projective case  was obtained by Simpson and the author \cite{Iy-Si}.


\begin{thebibliography}{AAAAA}

\bibitem[1]{BB14}
{\sc C.~B\"ar, C.~Becker:}
{\em Differential Characters and Geometric Chains};
in: Differential Characters, Lecture Notes in Math.~{\bf 2112}, Springer, Berlin 2014, p.~1--90

\bibitem[2]{Ch-Sm} J. Cheeger, J. Simons, {\em
Differential characters and geometric invariants}, Geometry and topology (College Park, Md., 1983/84), 
50--80, Lecture Notes in Math., \textbf{1167}, Springer, Berlin, 1985. 

\bibitem[3]{Chn-Sm} S.S. Chern, J. Simons, {\em Characteristic forms and geometric invariants}, The Annals of Mathematics, Second Series, Vol. 99, No. \textbf{1}, 1974. 

\bibitem[4]{DHZ} J. Dupont, R. Hain, S. Zucker, {\em Regulators and characteristic classes of flat bundles},  
The arithmetic and geometry of algebraic cycles (Banff, AB, 1998),  47--92, CRM Proc. Lecture Notes, \textbf{24}, Amer. Math. Soc., Providence, RI, 2000. .

\bibitem[5]{Freed}  D.S. Freed,  {\em Classical Chern-Simons theory. II.} Special issue for S. S. Chern. Houston J. Math. 28 (2002), no. \textbf{2}, 293--310. 

\bibitem[6]{GriffithsHarris} P. Griffiths, J. Harris, {\em Principles in Algebraic Geometry}. Reprint of the 1978 original. 
Wiley Classics Library. John Wiley and Sons, Inc., New York, 1994. xiv+813 pp.

\bibitem[7]{Singer} M. J. Hopkins, I. M. Singer, 
{\em Quadratic functions in geometry, topology, and M-theory.}
J. Differential Geom. 70 (2005), no. \textbf{3}, 329--452. 

\bibitem[8]{Iy-Si} J. N. Iyer, C. T. Simpson, {\em Regulators of canonical extensions are torsion; the smooth divisor case}, preprint 2007, arXiv math.AG/07070372.

\bibitem[9]{Karoubi} M. Karoubi,
{\em Th\'eorie g\'en\'erale des classes caract\'eristiques secondaires.} K-theory t. \textbf{4}, p. 55-87 (1990). 

\bibitem[10]{Narasimhan} M. S. Narasimhan, S. Ramanan, {\em Existence of universal connections}, I, II,  Amer. J. Math.  \textbf{83}  1961 563--572,  \textbf{85} (1963), 223--231. . 

\bibitem[11]{Noohi}  B. Noohi, {\em Fundamental groups of algebraic stacks. } J. Inst. Math. Jussieu 3 (2004), no. \textbf{1}, 69--103.

\bibitem[12]{Rez} A. Reznikov, {\em All regulators of flat bundles are torsion. } Ann. of Math. (2) 141 (1995), no. \textbf{2}, 373--386. 

\bibitem[13]{Sm-Su} J. Simons, D. Sullivan, {\em Axiomatic characterization of ordinary differential cohomology}. J. Topol. 1 (2008), no. \textbf{1}, 45--56.

\end {thebibliography}


\end{document}